\newtheorem{satz}{Theorem}
\newtheorem{theorem}[satz]{Theorem}
\newtheorem{lemma}[satz]{Lemma}
\newtheorem{corollary}[satz]{Corollary}
\newtheorem{remark}[satz]{Remark}
\def\Z{\mathbb {Z}}
\def\F{\mathbb {F}}
\def\E{\mathsf{E}}
\def\I{{\cal I}}
\def\a{\alpha}
\def\C{\mathbb{C}}
\def\P{{\cal P}}
\def\d{\delta}
\def\le{\leqslant}
\def\ge{\geqslant}
\def\_phi{\varphi}
\def\eps{\varepsilon}
\def\FF{\widehat}
\def\ov{\overline}
\def\Spec{{\rm Spec\,}}
\newtheorem{example}[satz]{Example}
\def\f{{\mathbb F}}
\def\la{\lambda}
\def\D{\Delta}
\def\C{\mathsf{C}}
\author{Shkredov I.D.}
\title{ A short note on the multiplicative energy of the spectrum of a set
}
\date{}
\begin{document}
	\maketitle

\begin{abstract}
	We obtain an upper bound for the multiplicative energy of the spectrum of an arbitrary set from  $\F_p$, 
	which is the best 
	possible up to the results on exponential sums over subgroups. 
\end{abstract}

\section{Introduction}
\label{sec:intr}

Let $p$ be a prime number and let $A$ be a subset of the prime field $\F_p$.
Denote by $\FF{A} (r)$, $r\in \F_p$ the Fourier transform of the characteristic function of the set $A$, namely,  
$$
	\FF{A} (r) = \sum_{a\in A} e^{-\frac{2\pi i ar}{p}} \,. 
$$
Given a real number  $\eps \in (0,1]$, define
\begin{equation}\label{def:spectrum}
\Spec_\eps (A) = \{ r \in \F_p ~:~ |\FF{A}(r)| \ge \eps |A| \} \,.
\end{equation}
The set $\Spec_\eps (A)$ is called the {\it spectrum} or the {\it set of large exponential sums} of our set $A$. 
Such sets are studied in \cite[Section 4.6]{TV}, further, in 
\cite{Bloom}---\cite{Green_Chang1},  \cite{Sanders_Roth_0.75}---\cite{S_diss} 
and in many other papers.
The spectrum appears naturally in any additive
problem  and, hence, 
it is important to know the structure of these sets. 
It is well--known that $\Spec_\eps (A)$ has strong {\it additive} properties, see, e.g., \cite{Bloom}, \cite{Ch_Fr}, \cite{S_les}.
This fact was used in \cite{S_sum-prod_les} to obtain a new property of the spectrum, namely, that  $\Spec_\eps (A)$ has poor {\it multiplicative} structure. 
It coincides with 
the philosophy 
 of the {\it sum--product} phenomenon, see, e.g., \cite{TV} that says that both additive and multiplicative structures  do not exist simultaneously.
Previously,  we used   the modern sum--product tools, see  \cite{Rudnev_pp}, \cite{RSS}
to demonstrate this poor multiplicative structure.
Here 
we apply the main sum--product result of \cite{Rudnev_pp} directly and obtain

\begin{theorem}
	Let $A\subseteq \F_p$ be a set, $|A| = \d p$ and $R \subseteq \Spec_\eps (A) \setminus \{0\}$ be any set. 
	Suppose that $p\le \eps^2 |A|^3$. 
	Then 
\begin{equation}\label{f:main_intr}
	|\{ (x,y,z,w) \in R^4 ~:~ xy=zw \}|
	\ll \eps^{-4} \d^{-1}  |R|^{3/2} \,.
\end{equation}
\label{t:main_intr}
\end{theorem}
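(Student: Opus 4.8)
The plan is to trade the purely multiplicative quantity in \eqref{f:main_intr} for an additively twisted version that remembers the hypothesis $R\subseteq\Spec_\eps(A)$, and then to recognise the latter as a point--plane incidence count to which the main result of \cite{Rudnev_pp} applies. Throughout I write $\E^\times(R)$ for the left--hand side of \eqref{f:main_intr} and $e_p(t)=e^{2\pi i t/p}$.

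First I would insert Fourier weights. Since $|\FF A(x)|\ge\eps|A|$ for every $x\in R$, we have $|\FF A(x)|^2|\FF A(z)|^2\ge\eps^4|A|^4$ whenever $x,z\in R$, and therefore
\begin{equation*}
\E^\times(R)\ \le\ \eps^{-4}|A|^{-4}\,W,\qquad
W:=\sum_{\substack{x,y,z,w\in R\\ xy=zw}}|\FF A(x)|^2\,|\FF A(z)|^2 .
\end{equation*}
This already accounts for the factor $\eps^{-4}$, and it remains to prove $W\ll|A|^{4}\d^{-1}|R|^{3/2}$, that is $W\ll|A|^{3}p\,|R|^{3/2}$ since $\d^{-1}=p/|A|$.

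Next I would open the weights through $|\FF A(x)|^2=\sum_{a,a'\in A}e_p\big(-(a-a')x\big)$ and the analogous identity for $z$, obtaining
\begin{equation*}
W=\sum_{a,a',c,c'\in A}\ \sum_{\substack{x,y,z,w\in R\\ xy=zw}}e_p\big(-(a-a')x-(c-c')z\big).
\end{equation*}
The inner sum counts solutions of the bilinear equation $xy=zw$ in $R$ in which the two sides are twisted by additive characters drawn from $A-A$. Untwisted, $xy=zw$ is multiplicatively homogeneous and Rudnev's theorem only returns a bound of generic strength (of the order $|R|^{5/2}$); the additive twist coming from the spectrum is precisely what breaks this homogeneity and should drive the exponent down toward $3/2$. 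This is the point at which additive structure is converted into a multiplicative estimate, in the spirit of the sum--product phenomenon. Concretely, the twisted quadruples become incidences between a point set $\mathcal P\subseteq\F_p^3$ and a family of planes $\Pi$, both assembled from $R$ together with the shifts from $A$, and the theorem of \cite{Rudnev_pp} then bounds the incidence count by a sum of the shape
\begin{equation*}
\frac{|\mathcal P|\,|\Pi|}{p}\ +\ |\mathcal P|^{1/2}|\Pi|\ +\ k\,|\Pi|,
\end{equation*}
with $k$ the maximal number of collinear points of $\mathcal P$. The factor $p$ in the first term is the same $p$ that appears on completing a character sum over $\F_p$, as in the identity $\sum_{t}\big|\sum_{z,w\in R}e_p(tzw)\big|^2=p\,\E^\times(R)$, and this is how the $\d^{-1}$ ultimately enters.

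The hypothesis $p\le\eps^2|A|^3$ serves two purposes. Rewritten as $\d^{-1}\le\eps^2|A|^2$, equivalently $|\Spec_\eps(A)|\lesssim|A|^2$, it keeps the point set within the admissible range $|\mathcal P|=O(p^2)$ of \cite{Rudnev_pp}, and it is the balance under which the resulting estimate reduces to $|A|^3p\,|R|^{3/2}$. I expect the genuine difficulty to lie exactly here: choosing the encoding so that $|\mathcal P|$ and $|\Pi|$ emerge with the correct powers of $|A|$ and $|R|$, and, above all, controlling the collinearity term $k\,|\Pi|$. The collinear points correspond to the degenerate solutions of $xy=zw$ (those forced by $x=z$, hence $y=w$, together with their multiplicative images), which already produce the unavoidable $\E^\times(R)\ge|R|^2$; these must be peeled off and shown to be of lower order, so that the main term survives and the exponent $3/2$, rather than the generic $5/2$, is what remains.
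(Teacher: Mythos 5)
Your first inequality is sound and matches the paper's opening move: for $x,z\in R$ one has $|\FF{A}(x)|^2|\FF{A}(z)|^2\ge \eps^4|A|^4$, hence $\E^\times(R)\le \eps^{-4}|A|^{-4}W$. But the next step --- expanding $|\FF{A}(x)|^2$ and $|\FF{A}(z)|^2$ into additive characters while keeping all four variables constrained to $R$ --- is a wrong turn, and the place where you say the ``genuine difficulty'' lies is exactly the part of the proof you have not supplied. The purpose of the Fourier weights is not to be expanded; it is to let you \emph{delete}, by positivity, the condition that two of the four variables lie in $R$. Parametrizing the solutions of $xy=zw$ (all entries nonzero) as $x=wt$, $z=yt$ with $y,w\in R$, $t\in\F_p^*$, and replacing $A$ by its balanced function $f_A$, one gets
\begin{equation*}
(\eps|A|)^4\,\E^\times(R)\ \le\ \sum_{y,w\in R}\ \sum_{t\in\F_p}|\FF{f_A}(wt)|^2\,|\FF{f_A}(yt)|^2 ,
\end{equation*}
where the inner sum is now \emph{complete}. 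Only then can Plancherel \eqref{svertka} be applied, converting the right--hand side into $p\sum_x r^2_{(f_A-f_A)R}(x)$, i.e.\ the weighted number of solutions of $(a_1-a_2)r_1=(a_3-a_4)r_2$ with $r_1,r_2\in R$. \emph{This} is the quantity that is a genuine point--plane incidence count, with $O(|A|^2|R|)$ points and planes, to which Theorem \ref{t:Misha+} applies and gives $\ll |A|^3|R|^{3/2}$, whence \eqref{f:main_intr}. Your sum $\sum_{a,a',c,c'}\sum_{xy=zw\ \mathrm{in}\ R}e_p(\cdots)$, by contrast, still carries the membership conditions on $y,w$ together with the multiplicative constraint, and the complex character weights destroy the positivity you would need to drop them; it is not an incidence count in any recognizable form, and bounding it is essentially the original problem over again.

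Two further ingredients you omit are essential. First, the balanced function $f_A=A-|A|/p$ (never mentioned in your sketch) makes the weights sum to zero, so that by Lemma \ref{l:Vinh} the main term $|\mathcal{P}||\Pi|/p$ of Theorem \ref{t:Misha+} disappears; your heuristic that this term is ``the same $p$ that appears on completing a character sum'' and is the source of $\d^{-1}$ is not how the proof goes --- the $\d^{-1}$ comes from the Plancherel factor: $\E^\times(R)\le p\,(\eps|A|)^{-4}\cdot|A|^3|R|^{3/2}=\eps^{-4}\d^{-1}|R|^{3/2}$. Second, the hypothesis $p\le\eps^2|A|^3$ is used not to keep $|\mathcal{P}|=O(p^2)$ but, via \eqref{f:spec_Par}, to guarantee $|R|\le p/(\eps^2|A|)\le|A|^2$, which is precisely what makes the collinearity term $k|\mathcal{P}|$ of Theorem \ref{t:Misha+} subordinate to $|\mathcal{P}|^{1/2}|\Pi|\ll|A|^3|R|^{3/2}$. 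In short, your list of tools (spectrum weights plus \cite{Rudnev_pp}) is the correct one, but the three steps that actually constitute the proof --- completion of the sum followed by Plancherel, balancing, and the deduction $|R|\le|A|^2$ --- are missing or misattributed, so the proposal is a plan rather than a proof.
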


Estimate \eqref{f:main_intr} is stronger than the results of \cite[Section 4]{S_sum-prod_les}  
and moreover one can show (see Remarks \ref{rem:E_4}, \ref{rem:E_2}) that the bound in Theorem \ref{t:main_intr} is sharp 
up to our current knowledge of some number--theoretical questions.
Also, in this paper we study other multiplicative characteristics of the spectrum, see Theorem \ref{t:no_R} and Theorem \ref{t:yes_R}, formula (\ref{f:yes_R_sigma}). 
As a byproduct we obtain by the same method a purely sum--product result, namely, a new lower bound on $AA+AA$ for sets with small sumset.

All logarithms are to base $2.$ The signs $\ll$ and $\gg$ are the usual Vinogradov symbols.
If we have a set $A$, then we will write $a \lesssim b$ or $b \gtrsim a$ if $a = O(b \cdot \log^c |A|)$, $c>0$.

\section{Notation and preliminary results}
\label{sec:notation}

	In this paper $p$ is an odd prime number, 
	$\F_p = \Z/p\Z$ and $\F_p^* = \F_p \setminus \{0\}$. 
	%
	%
	We denote the Fourier transform of a function  $f : \F_p \to \mathbb{C}$ by~$\FF{f},$
	\begin{equation}\label{F:Fourier}
	\FF{f}(\xi) =  \sum_{x \in \F_p} f(x) e( -\xi \cdot x) \,,
	\end{equation}
	where $e(x) = e^{2\pi i x/p}$. 
	We rely on the following basic identities. 
	The first one is called the Plancherel formula and its particular case $f=g$ is called the Parseval identity 
	\begin{equation}\label{F_Par}
	\sum_{x\in \F_p} f(x) \ov{g (x)}
	=
	\frac{1}{p} \sum_{\xi \in \F_p} \widehat{f} (\xi) \ov{\widehat{g} (\xi)} \,.
	\end{equation}
	Another  particular case of (\ref{F_Par}) is 
	\begin{equation}\label{svertka}
	\sum_{y\in \F_p} \Big|\sum_{x\in \F_p} f(x) g(y-x) \Big|^2
	= \frac{1}{p} \sum_{\xi \in \F_p} \big|\widehat{f} (\xi)\big|^2 \big|\widehat{g} (\xi)\big|^2 \,.
	\end{equation}
	In this paper we use the same letter to denote a set $A\subseteq \F_p$ and  its characteristic function $A: \F_p \to \{0,1 \}$.  
	Also, we write $f_A (x)$ for the {\it balanced function} of a set $A\subseteq \F_p$, namely, $f_A (x) = A(x) - |A|/p$.

	Let $A\subseteq \F_p$ be a set, and $\eps \in (0,1]$ be a real number.
	We have defined the set $\Spec_\eps (A)$ in \eqref{def:spectrum} already. 
	Clearly, $0\in \Spec_\eps (A)$, and $\Spec_\eps (A) = - \Spec_\eps (A)$.
	For further properties of $\Spec_\eps (A)$ see, e.g., \cite{Bloom}, \cite{Ch_Fr}, \cite{S_les}, \cite{S_sum-prod_les}. 
	Usually, 
	we denote by $\d$ the density of our set $A$, that is, $\d = |A|/p$.
	From Parseval identity (\ref{F_Par}), we have a simple upper bound for  size of the spectrum, namely,
	\begin{equation}\label{f:spec_Par}
	|\Spec_\eps (A)| \le \frac{p}{|A| \eps^2}  = \frac{1}{\d \eps^2} \,.
	\end{equation}

	Put
	$\E^{+}(A,B)$ for the {\it common additive energy} of two sets $A,B \subseteq \f_p$
	(see, e.g., \cite{TV}), that is, 
	$$
	\E^{+} (A,B) = |\{ (a_1,a_2,b_1,b_2) \in A\times A \times B \times B ~:~ a_1+b_1 = a_2+b_2 \}| \,.
	$$
	If $A=B$, then  we simply write $\E^{+} (A)$ instead of $\E^{+} (A,A)$
	and the quantity $\E^{+} (A)$ is called the {\it additive energy} in this case. 
	One can consider $\E^{+}(f)$ for any complex function $f$ as well.  
	Sometimes we  use representation function notations like $r_{AB} (x)$ or $r_{A+B} (x)$, which counts the number of ways $x \in \F_p$ can be expressed as a product $ab$ or a sum $a+b$ with $a\in A$, $b\in B$, respectively. 
	Put $\sigma^{+} (A) = \sum_{x \in A} r_{A-A} (x)$.  
	Further clearly
	\begin{equation*}\label{f:energy_convolution}
	\E^{+} (A,B) = \sum_x r_{A+B}^2 (x) = \sum_x r^2_{A-B} (x) = \sum_x r_{A-A} (x) r_{B-B} (x)
	\end{equation*}
	and by (\ref{svertka}),
	\begin{equation}\label{f:energy_Fourier}
	\E^{+}(A,B) = \frac{1}{p} \sum_{\xi} |\FF{A} (\xi)|^2 |\FF{B} (\xi)|^2 \,.
	\end{equation}
	Similarly, one can define $\E^\times (A,B)$, $\E^{\times} (A)$, $\E^{\times} (f)$  and so on.

	\bigskip

	Now we recall some results from 
	the incidence geometry, see, e.g., \cite[Section 8]{TV}.    
	First of all, we need a general design bound for the number of incidences, see \cite{s_D(A),TTT,Vinh}.
	Let $\mathcal{P} \subseteq \F_q^3$ be a set of points  and $\Pi$ be a collection of planes in $\F_q^3$. 
	Having $p\in \mathcal{P}$ and $\pi \in \Pi$, we write 
	\begin{displaymath}
	\I (p,\pi) = \left\{ \begin{array}{ll}
	1 & \textrm{if } q\in \pi\\
	0 & \textrm{otherwise.}
	\end{array} \right.
	\end{displaymath}
	Put $\mathcal{I} (\mathcal{P}, \Pi) = \sum_{p\in \P, \pi \in \Pi} \I (p,\pi)$.
	We have (see \cite{s_D(A)})

	\begin{lemma}
		For any functions $\a : \P \to \mathbb{C}$, $\beta : \Pi \to \mathbb{C}$ one has 
		\begin{equation}\label{f:Vinh}
		|\sum_{p,\pi} \I (p,\pi) \a(p) \beta(\pi) | \le q \| \a \|_2 \| \beta \|_2 \,,
		\end{equation}
		provided either $\sum_{p\in \P} \a(p) = 0$ or $\sum_{\pi\in \Pi} \beta (\pi) = 0$.
		\label{l:Vinh}
	\end{lemma}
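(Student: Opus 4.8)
The plan is to read the bilinear form as an inner product against the point--plane incidence matrix and to exploit the fact that, over the \emph{full} point and plane sets of $\F_q^3$, this matrix has an almost trivial spectrum. First I would extend $\a$ and $\b$ by zero outside $\mathcal{P}$ and $\Pi$; this changes neither $\|\a\|_2$, $\|\b\|_2$ nor the vanishing hypotheses $\sum_p \a(p)=0$, $\sum_\pi \b(\pi)=0$, so we may assume $\mathcal{P}=\F_q^3$ and that $\Pi$ runs over all affine planes. Writing $M$ for the incidence matrix with entries $M_{p,\pi}=\I(p,\pi)$, the quantity to be bounded equals $\sum_\pi \b(\pi)\,(M^{\T}\a)(\pi)$, where $(M^{\T}\a)(\pi)=\sum_p \I(p,\pi)\a(p)$.

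The key step is to compute $MM^{\T}$. Every point of $\F_q^3$ lies on exactly $q^2+q+1$ affine planes, and any two distinct points $x\neq y$ lie on exactly $q+1$ common planes, namely the planes containing the line through $x$ and $y$ (the admissible normal directions form the projectivisation of the $2$--dimensional space orthogonal to $x-y$). Hence $(MM^{\T})_{x,y}$ equals $q^2+q+1$ on the diagonal and $q+1$ off it, i.e. $MM^{\T}=q^2 I+(q+1)J$ with $J$ the all--ones matrix. Moreover every plane carries exactly $q^2$ points, so $M^{\T}\mathbf{1}=q^2\mathbf{1}$. Expanding $\|M^{\T}\a\|_2^2=\sum_{x,y}\a(x)\ov{\a(y)}\,(MM^{\T})_{x,y}$ then gives the identity $\|M^{\T}\a\|_2^2=q^2\|\a\|_2^2+(q+1)\,\big|\sum_p \a(p)\big|^2$.

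Now I would split into the two hypotheses. If $\sum_p \a(p)=0$, the last identity yields $\|M^{\T}\a\|_2=q\|\a\|_2$, and Cauchy--Schwarz in the variable $\pi$ gives $\big|\sum_\pi \b(\pi)(M^{\T}\a)(\pi)\big|\le \|\b\|_2\,\|M^{\T}\a\|_2=q\|\a\|_2\|\b\|_2$, as required. If instead $\sum_\pi \b(\pi)=0$, I would write $\a=c\mathbf{1}+\a'$ with $\sum_p \a'(p)=0$, so that $M^{\T}\a=cq^2\mathbf{1}+M^{\T}\a'$; since $\b$ sums to zero, the constant part contributes $cq^2\sum_\pi \b(\pi)=0$, leaving $\sum_\pi \b(\pi)(M^{\T}\a')(\pi)$. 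The same Cauchy--Schwarz, together with $\|M^{\T}\a'\|_2=q\|\a'\|_2\le q\|\a\|_2$, finishes this case.

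The only real content is the spectral computation of $MM^{\T}$, that is, the two exact incidence counts ($q+1$ planes through two points and $q^2$ points on a plane); once these are in hand the estimate is a one--line Cauchy--Schwarz. The point worth flagging is that one should route \emph{both} cases through $MM^{\T}$ rather than through $M^{\T}M$: the latter has irregular off--diagonal entries ($0$ for parallel planes, $q$ for intersecting ones) and so admits no clean eigenvalue decomposition, whereas projecting off the constant function lets the tidy identity for $MM^{\T}$ cover the $\b$--vanishing case as well.
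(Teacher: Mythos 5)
Your proof is correct: the exact counts ($q^2+q+1$ planes through a point, $q+1$ planes through two distinct points, $q^2$ points on a plane) give $MM^{\T}=q^2I+(q+1)J$, hence $\|M^{\T}\a\|_2^2=q^2\|\a\|_2^2+(q+1)\big|\sum_p\a(p)\big|^2$, and both cases then follow by Cauchy--Schwarz after projecting off the constant function. The paper itself offers no proof of this lemma (it simply cites \cite{s_D(A)}), and your argument is essentially the standard design/spectral computation (the expander--mixing style bound going back to \cite{Vinh}) used in those references, so there is nothing to flag.
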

	
	Of course, similar arguments work not just for points/planes incidences but, e.g., points/lines incidences and so on. 
	A much more deep 
	result on incidences is contained in \cite{Rudnev_pp} (or see \cite[Theorem 8]{MPR-NRS}).
	We formulate a combination of these results  and  
	Lemma \ref{l:Vinh}, see \cite{s_D(A)}.

	\begin{theorem}
		Let $p$ be an odd prime, $\mathcal{P} \subseteq \F_p^3$ be a set of points and $\Pi$ be a collection of planes in $\F_p^3$. 
		Suppose that $|\mathcal{P}| \le |\Pi|$ and that $k$ is the maximum number of collinear points in $\mathcal{P}$. 
		Then the number of point--planes incidences satisfies 
		\begin{equation}\label{f:Misha+_a}
		\mathcal{I} (\mathcal{P}, \Pi)  - \frac{|\mathcal{P}| |\Pi|}{p} \ll |\mathcal{P}|^{1/2} |\Pi| + k |\mathcal{P}| \,.	
		\end{equation}
		\label{t:Misha+}	
	\end{theorem}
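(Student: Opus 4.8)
The plan is to prove \eqref{f:Misha+_a} by playing the two incidence inputs just recorded off against each other, splitting the argument according to the size of $|\Pi|$ relative to $p^2$. The deep bound of \cite{Rudnev_pp} (in the form of \cite[Theorem 8]{MPR-NRS}) is efficient when the configuration is not too large, whereas the elementary design bound of Lemma~\ref{l:Vinh} becomes efficient precisely when $|\Pi|$ exceeds $p^2$; together they should cover the whole range. Throughout I would use the hypothesis $|\mathcal{P}| \le |\Pi|$.

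First I would dispose of the large regime $|\Pi| \ge p^2$. Here I apply Lemma~\ref{l:Vinh} to the balanced indicator functions of $\mathcal{P}$ and $\Pi$: subtracting the densities makes one of the two sums vanish, so the hypothesis of the lemma is satisfied and the constant it produces is exactly $|\mathcal{P}||\Pi|/p$. This yields the spectral estimate $|\mathcal{I}(\mathcal{P},\Pi) - |\mathcal{P}||\Pi|/p| \ll p\,(|\mathcal{P}||\Pi|)^{1/2}$. Since now $p \le |\Pi|^{1/2}$, the right--hand side is at most $|\mathcal{P}|^{1/2}|\Pi|$, so \eqref{f:Misha+_a} already holds in this range with room to spare and with no collinear term at all. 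In the complementary regime $|\Pi| < p^2$ one automatically has $|\mathcal{P}| \le |\Pi| < p^2$, so the point set is small enough for the bound of \cite{Rudnev_pp} to apply, and it delivers the main term $|\mathcal{P}||\Pi|/p$ together with the error $|\mathcal{P}|^{1/2}|\Pi|$ and a contribution supported on lines carrying many points of $\mathcal{P}$. Gluing the two regimes at the threshold $|\Pi| \sim p^2$ then gives \eqref{f:Misha+_a}.

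I expect this collinear contribution to be the main obstacle: the genuine difficulty is to show that, in the small regime and under $|\mathcal{P}| \le |\Pi|$, the degenerate incidences forced along rich lines are absorbed by $k|\mathcal{P}|$ rather than the cruder $k|\Pi|$. Concretely I would separate off the rich lines --- each lying in at most $p+1$ planes, and few in number by the standard double count on pairs of collinear points --- estimate their incidences directly, and then apply Rudnev's bound to the residual point set, whose maximal collinearity is now $O(|\mathcal{P}|^{1/2})$ and hence whose collinear error folds into $|\mathcal{P}|^{1/2}|\Pi|$. Making this bookkeeping collapse exactly to $k|\mathcal{P}|$, and verifying that the switch between the two size regimes occurs without loss, is the delicate step; the extraction of the deviation form of Lemma~\ref{l:Vinh} via the balanced functions, by contrast, I regard as routine.
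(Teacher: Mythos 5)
The first thing to note is that the paper does not prove Theorem \ref{t:Misha+} at all: it is quoted from \cite{s_D(A)} as a known combination of the incidence theorem of \cite{Rudnev_pp} (see also \cite[Theorem 8]{MPR-NRS}) with Lemma \ref{l:Vinh}, so your attempt goes beyond what the text offers. Your two--regime plan is indeed exactly that combination, and your large regime is handled correctly: applying Lemma \ref{l:Vinh} to the balanced function $\mathcal{P}(x) - |\mathcal{P}|/p^3$ on all of $\F_p^3$ against the indicator of $\Pi$ gives $|\mathcal{I}(\mathcal{P},\Pi) - |\mathcal{P}||\Pi|/p| \le p\,(|\mathcal{P}||\Pi|)^{1/2}$, which is at most $|\mathcal{P}|^{1/2}|\Pi|$ once $|\Pi| \ge p^2$; and in the complementary regime the chain $|\mathcal{P}| \le |\Pi| < p^2$ is precisely the hypothesis under which Rudnev's theorem applies.

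The gap is the step you yourself single out, and it is not merely delicate but impossible: Rudnev's theorem, in the orientation $|\mathcal{P}| \le |\Pi|$, produces the collinear error term $k|\Pi|$ (the collinearity of the \emph{smaller} family multiplied by the size of the \emph{larger} one), and this cannot be improved to $k|\mathcal{P}|$, because inequality \eqref{f:Misha+_a} with $k|\mathcal{P}|$ is false as stated. Take $\mathcal{P}$ to be $\lfloor\sqrt{p}\rfloor$ points on a line $\ell$ and $\Pi$ the $p+1$ planes containing $\ell$; then $|\mathcal{P}|\le|\Pi|<p^2$, $k=|\mathcal{P}|$, and every point lies on every plane, so the left--hand side of \eqref{f:Misha+_a} is $\gg p^{3/2}$ while $|\mathcal{P}|^{1/2}|\Pi| + k|\mathcal{P}| \ll p^{5/4}$. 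The same configuration kills your rich--line surgery: the unique rich line lies in all $p+1$ planes of $\Pi$ and by itself carries more incidences than the target bound permits, so ``estimating their incidences directly'' cannot recover \eqref{f:Misha+_a}, and no bookkeeping will. What your argument does prove, immediately and with no rich--line analysis at all (in the small regime the main term is harmless, since $|\mathcal{P}||\Pi|/p \le |\mathcal{P}|^{1/2}|\Pi|$ when $|\mathcal{P}|\le p^2$), is \eqref{f:Misha+_a} with $k|\Pi|$ in place of $k|\mathcal{P}|$ --- which is the statement actually established in \cite{Rudnev_pp} and \cite{MPR-NRS}. The printed term $k|\mathcal{P}|$ is evidently a misquotation of that result; the honest course is to prove the $k|\Pi|$ version (your two regimes do this) and then check separately that the applications in Theorems \ref{t:yes_R} and \ref{t:Misha_zero_sum}, where the theorem is invoked for concrete point and plane families, tolerate the weaker error term.
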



Finally, we  
recall  a simple  asymptotic formula for the number of points/lines incidences in the case when  the set of points forms a Cartesian product,
see \cite{SZ_inc} and also \cite{s_D(A)}.

\begin{theorem}
	Let $A,B \subseteq \F_p$ be sets, $|A| \le |B|$, $\mathcal{P} = A\times B$, and $ \mathcal{L}$ be a collection of lines in $\F^2_p$.
	Then 
	\begin{equation}\label{f:line/point_as}
	\mathcal{I}(\mathcal{P}, \mathcal{L}) - \frac{|A| |B| |\mathcal{L}|}{p} \ll |A|^{3/4} |B|^{1/2} |\mathcal{L}|^{3/4} + |\mathcal{L}| + |A| |B| \,.
	\end{equation}
	\label{t:line/point_as}	
\end{theorem}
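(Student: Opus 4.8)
The plan is to transfer the planar incidence count to a point--plane incidence count in $\F_p^3$ and then to apply Theorem \ref{t:Misha+}; the main term $\frac{|A||B||\mathcal{L}|}{p}$ will come for free from the $\frac{|\mathcal{P}||\Pi|}{p}$ term there, and the three error terms will come from the two error terms of Theorem \ref{t:Misha+}. First I would dispose of the degenerate lines. A vertical line $x=a_0$ meets $A\times B$ in $|B|$ points when $a_0\in A$ and in none otherwise, so all vertical lines contribute at most $|A||B|$ incidences; horizontal lines are symmetric. Both are absorbed into the error term $|A||B|=|\mathcal{P}|$, and their expected contribution to the main term is of the same order. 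For the surviving lines I write $\ell=(m,b)$ for $y=mx+b$, so that $(a,c)\in A\times B$ lies on $\ell$ exactly when $c=ma+b$.

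The model for the lift is the exact identity $\mathcal{I}(\mathcal{Q},\{\pi_a\}_{a\in A})=\mathcal{I}(\mathcal{P},\mathcal{L})$ for non--vertical lines, where $\mathcal{Q}=\{(m,b,c)\in\F_p^3 : (m,b)\in\mathcal{L},\ c\in B\}$ and $\pi_a=\{(x,y,z) : ax+y-z=0\}$: indeed $(m,b,c)\in\pi_a$ is precisely $c=ma+b$. Applied verbatim this is wasteful, since the two families are badly unbalanced and the planes $\pi_a$ all pass through the line $\{x=0,\ y=z\}$, so I would instead symmetrise first. Concretely, by Cauchy--Schwarz it suffices to bound $\sum_{\ell}r(\ell)^2$, the number of ordered pairs of points of $\mathcal{P}$ lying on a common line of $\mathcal{L}$ (equivalently, after the dual Cauchy--Schwarz, the number of pairs of lines of $\mathcal{L}$ meeting inside $A\times B$). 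After a change of variables recording a column difference, this pair count becomes a genuine point--plane incidence count between a set of points and a set of planes of comparable sizes built from $A$, $B$ and $\mathcal{L}$, to which Theorem \ref{t:Misha+} applies; its term $|\mathcal{P}|^{1/2}|\Pi|$ yields, on taking the square root, the leading error $|A|^{3/4}|B|^{1/2}|\mathcal{L}|^{3/4}$.

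The main obstacle is the term $k|\mathcal{P}|$ of Theorem \ref{t:Misha+}, that is, controlling the largest number $k$ of collinear points in the lifted configuration. Collinear points encode degenerate families of lines --- concurrent pencils or many lines of a single slope --- and separating and estimating these is exactly what forces the two remaining error terms $|\mathcal{L}|$ and $|A||B|$: the non--degenerate part is governed by the first error term of Theorem \ref{t:Misha+}, while the degenerate part is charged to $|\mathcal{L}|+|A||B|$. I expect $k$ to be kept small using the Cartesian structure of $A\times B$, so that the hypothesis $|\mathcal{P}|\le|\Pi|$ of Theorem \ref{t:Misha+} holds after symmetrisation and the degenerate contribution is genuinely lower order. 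A cleaner but essentially equivalent route is to quote the Cartesian incidence bound of \cite{SZ_inc} together with its refinement over $\F_p$ in \cite{s_D(A)}; the scheme above is precisely how those estimates are deduced from Theorem \ref{t:Misha+}.
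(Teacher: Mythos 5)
You should first be aware that the paper does not prove Theorem \ref{t:line/point_as} at all: it is recalled from \cite{SZ_inc}, in the $\F_p$--asymptotic form worked out in \cite{s_D(A)}, so the paper's only ``proof'' is the citation that you yourself fall back on in your last sentence. Measured against the cited argument, your road map is the right one --- discard vertical and horizontal lines (your $|A||B|$ accounting for them is correct), apply Cauchy--Schwarz to pass to $\sum_{\ell}r(\ell)^2$, realise that pair count as a weighted point--plane incidence problem, and invoke Theorem \ref{t:Misha+} --- and the arithmetic of your leading term is consistent: a bound $\sum_{\ell}r(\ell)^2\ll |A|^{3/2}|B||\mathcal{L}|^{1/2}$ does return $|A|^{3/4}|B|^{1/2}|\mathcal{L}|^{3/4}$ after the square root, and Rudnev's main term returns $|A||B||\mathcal{L}|/p$.

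However, as a proof your text has a genuine gap, and it sits exactly at the two places where you wave your hands. First, the ``change of variables recording a column difference'' is never written down; a working choice is $\mathcal{Q}=\{(m,c,a') : (m,c)\in\mathcal{L},\ a'\in A,\ ma'+c\in B\}$ against the planes $\pi_{(a,b)}=\{(X,Y,Z) : aX+Y=b\}$, $(a,b)\in A\times B$, for which $\mathcal{I}(\mathcal{Q},\Pi)=\sum_{\ell}r(\ell)^2$ over non-vertical lines; one must then verify the hypothesis $|\mathcal{Q}|\le|\Pi|$ of Theorem \ref{t:Misha+}, which is not automatic (here $|\mathcal{Q}|=\mathcal{I}(\mathcal{P},\mathcal{L})$ itself, so a case distinction is forced). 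Second, and more seriously, your statement ``I expect $k$ to be kept small using the Cartesian structure'' is false as written: for any point $(x_0,y_0)\in A\times B$ the points $\{(m,\,y_0-mx_0,\,x_0) : (m,\,y_0-mx_0)\in\mathcal{L}\}$ of $\mathcal{Q}$ are collinear in $\F_p^3$, so $k$ is at least the maximal number of lines of $\mathcal{L}$ concurrent at a point of $\mathcal{P}$, which can be of order $|\mathcal{L}|$. With such a $k$, your scheme fed through Cauchy--Schwarz only yields $\mathcal{I}(\mathcal{P},\mathcal{L})\ll k|\mathcal{L}|$ from the term $k|\mathcal{Q}|$, which is vacuous. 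The cited proofs must first prune rich points and pencils --- that pruning is precisely what generates the lower-order terms $|\mathcal{L}|+|A||B|$ --- and only afterwards apply the point--plane bound with a legitimate value of $k$. So: correct strategy, correctly attributed, but the two assertions that carry the entire weight of the argument are exactly the ones left unproven.
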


\section{The proof of the main results}
\label{sec:proofs}

	Let $A\subseteq \F_p$ be a set. We write 
$$
\E^\times_k (A) = \sum_x r^k_{A/A} (x) 
$$
for any $k\ge 1$. 
Our aim is to obtain an upper bound for $\E^\times_2$--energy of the spectrum but before that we prove an optimal result for  $\E^\times_4$ 
which is interesting in its own right.
We use arguments similar to \cite{Petridis_old}.

\begin{theorem}
	Let $A\subseteq \F_p$ be a set, $|A| = \d p$ and $R=\Spec_\eps (A) \setminus \{0\}$. 
	Then
\begin{equation}\label{f:no_R_E4}
	\E^\times_4 (R) \lesssim   \eps^{-16} \d^{-4} \left( \frac{\E^{+} (f_A)}{|A|^3} \right)^2 \,.
\end{equation}
\label{t:no_R}
\end{theorem}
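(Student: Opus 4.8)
The plan is to turn the purely multiplicative counting problem for $R$ into additive information about $A$. Writing
\[
\E^{\times}_4(R)=\sum_{\lambda\neq 0}r_{R/R}^4(\lambda)=\sum_{\lambda\neq 0}|R\cap\lambda R|^4 ,
\]
this quantity counts the eight--tuples $(a_1,b_1,\dots,a_4,b_4)\in R^8$ whose four ratios $a_i/b_i$ all coincide. Since the only way $R$ enters is through spectrum membership, for each $r\in R$ I would insert the pointwise majorant
\[
R(r)\le \eps^{-2}|A|^{-2}\,|\FF A(r)|^2 ,
\]
valid by the definition of $\Spec_\eps(A)$; because $0\notin R$ one may replace $\FF A$ by $\FF{f_A}$ here. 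Doing this in all eight slots bounds $\E^{\times}_4(R)$ by $\eps^{-16}|A|^{-16}\sum_{\lambda\neq 0}h(\lambda)^4$, where $h(\lambda)=\sum_{s\neq 0}|\FF A(\lambda s)|^2|\FF A(s)|^2$. The reason to pass to the balanced function is that, for $\xi\neq 0$, $|\FF A(\xi)|^2=\FF g(\xi)$ with $g(t)=\sum_x f_A(x)f_A(x-t)$ the autocorrelation of $f_A$, a \emph{mean--zero} weight satisfying $\sum_t g(t)=0$ and $\sum_t g(t)^2=\E^{+}(f_A)$. Thus all additive structure of $A$ is now packaged into $\E^{+}(f_A)$.

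The task becomes to estimate $\sum_{\lambda\neq 0}h(\lambda)^4$. A natural first move is Cauchy--Schwarz: since $\sum_s|\FF A(\lambda s)|^4=\sum_s|\FF A(s)|^4$, the maximum of $h$ is attained at $\lambda=1$ and equals $\sum_{s\neq 0}|\FF A(s)|^4=p\,\E^{+}(f_A)$, so that
\[
\sum_{\lambda\neq 0}h(\lambda)^4\le \big(p\,\E^{+}(f_A)\big)^2\sum_{\lambda\neq 0}h(\lambda)^2=\big(p\,\E^{+}(f_A)\big)^2\,\E^{\times}\!\big(|\FF A|^2\big),
\]
which reduces everything to the weighted multiplicative energy of the Fourier weight. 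One would then try to bound this last energy by Rudnev's point--plane incidence theorem, Theorem~\ref{t:Misha+}: decompose $|\FF A|^2$ into dyadic level sets (each of which is itself a spectrum of $A$), apply the incidence bound to the associated product sets, and control the collinear--points term. The Cartesian--product asymptotic of Theorem~\ref{t:line/point_as} is what I would use to pin down the \emph{main} term $|\mathcal P||\mathcal L|/p$ precisely, which is what makes the result optimal rather than merely an upper bound. Tracking the powers of $p$, $|A|$, $\eps$ and $\delta=|A|/p$ through the incidence estimate is then bookkeeping that should output $\eps^{-16}\delta^{-4}\big(\E^{+}(f_A)/|A|^3\big)^2$.

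The step I expect to be the main obstacle is exactly the control of $\E^{\times}(|\FF A|^2)$, because the positive majorization above is genuinely lossy: for a random $A$ the weight $|\FF A|^2$ is spread uniformly over $\F_p^*$ while the spectrum is essentially empty, and a crude bound on $\E^{\times}(|\FF A|^2)$ then overcounts badly (one checks that a clean bound of the shape $\E^{\times}(|\FF A|^2)\lesssim p^2|A|^6$ fails once $|A|\ll p^{1/2}$). The resolution is not to discard the spectral localization at $\lambda=1$ but to keep it, and this is precisely where the mean--zero cancellation of $f_A$ must be exploited through the design bound, Lemma~\ref{l:Vinh}, whose hypothesis $\sum_{p\in\P}\a(p)=0$ is tailored to balanced weights. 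Arranging the character sums arising from $g$ so that Lemma~\ref{l:Vinh} and Theorem~\ref{t:Misha+} apply with cancellation, and so that exactly the square $\E^{+}(f_A)^2$ survives with the sharp exponents in $\eps$ and $\delta$, is the delicate heart of the argument.
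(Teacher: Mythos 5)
Your first reduction coincides with the paper's: inserting $R(r)\le \eps^{-2}|A|^{-2}|\FF{A}(r)|^2$ into every slot and passing to $f_A$ gives
$$
\E^\times_4 (R) \le (\eps|A|)^{-16}\sum_{\la\neq 0} h(\la)^4, \qquad h(\la)=p\,\E^{+}(f_A,\la f_A)=p\, r_{(f_A-f_A)/(f_A-f_A)}(\la)\,,
$$
exactly as in the opening display of the paper's proof. The fatal step is the next one. Bounding two of the four factors of $h(\la)$ by $\max_{\la}h(\la)\le h(1)=p\,\E^{+}(f_A)$ and reducing to $\Sigma_2:=\sum_{\la\neq0}h(\la)^2$ cannot work, for the reason you yourself compute: since $\sum_{\la\neq 0}h(\la)=\bigl(\sum_{s\neq0}|\FF{A}(s)|^2\bigr)^2=(p|A|-|A|^2)^2$, Cauchy--Schwarz in $\la$ forces $\Sigma_2\gtrsim p^3|A|^4$, while your reduction requires $\Sigma_2\lesssim p^2|A|^6$; these are incompatible as soon as $|A|\ll p^{1/2}$. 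This is not a repairable defect inside your scheme: once the two maxima have been pulled out, the inequality you would have to prove is simply false, and no subsequent use of the cancellation in Lemma \ref{l:Vinh} can resurrect it. Your closing paragraph concedes precisely this --- the ``delicate heart of the argument'' is named but not supplied --- so the proposal is a correct first step followed by a dead end and a gesture.

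The paper's actual route keeps the fourth power intact and dyadically decomposes on the physical side rather than the Fourier side. Writing $\sigma=\sum_\la r^4_{(f_A-f_A)/(f_A-f_A)}(\la)$, one pigeonholes the values of $r_{f_A-f_A}$ onto a level set $P$ with $\D<|r_{f_A-f_A}|\le 2\D$, so that $\sigma\lesssim\D^4\sum_\la r^4_{(f_A-f_A)/P}(\la)$, and then studies the level sets $S_\tau=\{\la : |r_{(f_A-f_A)/P}(\la)|\ge\tau\}$. The quantity $\tau|S_\tau|$ is interpreted as weighted point--line incidences between $\mathcal{P}=A\times P$ and $\mathcal{L}=S_\tau\times A$, the balanced weights $f_A$ making the main term of Theorem \ref{t:line/point_as} vanish. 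Note that it is the Stevens--de Zeeuw point--line bound, not Rudnev's point--plane theorem (which the paper reserves for Theorem \ref{t:yes_R}), that is the engine here: its exponent $3/4$ yields $|S_\tau|\ll |A|^6|P|^2\tau^{-4}$, and the decay $\tau^{-4}$ is exactly what makes the dyadic summation of the \emph{fourth} moment converge to $\sigma\lesssim |A|^6|P|^2$ --- this is also why the method works for $\E^\times_4$ but not for $\E^\times_k$ with $k<4$, as the paper remarks after Theorem \ref{t:no_R}. Since $\D^2|P|\le\E^{+}(f_A)$, this gives $\E^\times_4(R)\lesssim(\eps|A|)^{-16}p^4\D^4|P|^2|A|^6\le\eps^{-16}\d^{-4}\bigl(\E^{+}(f_A)/|A|^3\bigr)^2$. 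None of this machinery --- the set $P$, the incidence interpretation of $S_\tau$ with balanced weights, the $\tau^{-4}$ decay, or the case analysis of which incidence term dominates --- is present in your proposal.
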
	
\begin{proof}
Applying formula \eqref{f:energy_Fourier} and the definition of the spectrum \eqref{def:spectrum}, we notice that
$$
	\frac{(\eps |A|)^4}{p} \cdot  r_{R/R} (\la) \le p^{-1} \sum_{x\in R,\, \la x \in R} |\FF{A} (x)|^2 |\FF{A} (\la x)|^2 \le p^{-1} \sum_{x} |\FF{f_A} (x)|^2 |\FF{f_A}
	 (\la x)|^2
	 = \E^{+} (f_A,\la f_A) \,.
$$
	Hence
$$
	\E^\times_4 (R) \le (\eps |A|)^{-16} p^4 \sum_{\la} \E^{+} (f_A,\la f_A)^4 =  (\eps |A|)^{-16} p^4  \sum_\la r^4_{(f_A-f_A)/(f_A-f_A)} (\la) = (\eps |A|)^{-16} p^4  \cdot \sigma \,.
$$
	By the Dirichlet principle 
	there is $\Delta>0$ and a set $P$ such that $\D < |r_{f_A-f_A} (\la)| \le 2\D$ on $P$ and 
$$
	\sigma \lesssim \D^4 \sum_\la r^4_{(f_A-f_A)/P} (\la) = \D^4 \sum_{\la} \left|\left\{ \la p = a_1-a_2 ~:~  p\in P \right\}\right|^4 \,,
$$
	where $a_1,a_2$ have weights $f_A(a_1), f_A (a_2)$, correspondingly. 
	Let $\tau>0$ and $S_\tau$ be the set of all $\la$ such that 
	$|r_{(f_A-f_A)/P} (\la)| \ge \tau$.
	Since $r_{(f_A-f_A)/P} (\la) = r_{(A-A)/P} (\la) +\d^2 p |P|$, it follows that
$$
	\tau |S_\tau| \le \sum_{\la \in S_\tau} |r_{(f_A-f_A)/P} (\la)| \le |A|^2 |P| + \d^2 |P| p^2  = 2|A|^2 |P| \,.
$$
	In particular, $|S_\tau| \le 2|A|^2 |P|/\tau$. 
	The number of the solutions to the equation $sp = a_1-a_2$ can be interpreted as the number of incidences between the set of  lines 
	$\mathcal{L} = S_\tau \times A$, counting with the weight $f_A(a_1)$ and the sets of points $\mathcal{P} = A\times P$, again counting with the weight $f_A(a_2)$. 
	Applying Theorem \ref{t:line/point_as}, we obtain
\begin{equation}\label{f:S_tau-}
	\tau |S_\tau| = 	\mathcal{I}(\mathcal{P}, \mathcal{L}) \ll 
	|A|^{3/2} |P|^{1/2} |S_\tau|^{3/4} + |S_\tau| |A| + |A| |P| \,.
\end{equation}
	If the first term dominates, then we have 
\begin{equation}\label{f:S_tau}
	|S_\tau| \ll |A|^6 |P|^2 /\tau^{4} \,.
\end{equation} 
	In view of the inequality $|S_\tau| \le 2|A|^2 |P|/\tau$ one can suppose that $\tau^3 \gg |A|^4 |P| \ge |A|^3$ because otherwise it is nothing to prove.
	It gives us that $\tau \gg |A|$ and hence the second term in \eqref{f:S_tau-} is negligible. 
	We will consider the case when the third term in \eqref{f:S_tau-} dominates later but now let us remark that in this case  $\tau^3 \gg |A|^5 |P|$ because otherwise it is nothing to prove.
	Thus, by summation of formula (\ref{f:S_tau}), we obtain 
$$
	\sigma \lesssim |A|^6 |P|^2 
$$
	and hence 
$$
	\E^\times_4 (R) \lesssim (\eps |A|)^{-16} p^4 |A|^6 \D^4 |P|^2 \le  \eps^{-16} \d^{-4} \E^{+} (f_A)^2 / |A|^6
$$
	as required.	
	It remains to consider the case when the third term in \eqref{f:S_tau-} dominates and we know that $\tau^3 \gg |A|^5 |P|$.
	In other words, if we consider the ordering 
	$$|r_{(f_A-f_A)/P} (s_1)| \ge |r_{(f_A-f_A)/P} (s_2)| \ge \dots \ge |r_{(f_A-f_A)/P} (s_j)| \ge \dots \,,$$
	then 
	there is an effective bound $|r_{(f_A-f_A)/P} (s_j)| \le |A| |P| j^{-1}$ for $j\ge J := (|P|/|A|)^{2/3}$. 
	Again, by summation we obtain
$$
	\sigma \ll \sum_{j\ge J} (|A| |P| /j)^4 \ll |A|^4 |P|^4 J^{-3} \ll |P|^2 |A|^6
$$
	and it gives the same bound for $\E^\times_4 (R)$. 
	This completes the proof. 
$\hfill\Box$
\end{proof}


\begin{remark}
	Let $A$ be a multiplicative subgroup of order $p^{2/3}$. 
	Then the best known bound for the Fourier coefficients of $A$ is $|\FF{A} (r)| <\sqrt{p}$, $\forall r\neq 0$, see,  e.g., \cite{KShp}. 
	On the other hand,  taking  $R$ equals a coset of $A$ belonging to $\Spec_\eps (A) \setminus \{0\}$,  we see that $\E^\times_4 (R) \ge |R|^5 = |A|^5$ . 
	Applying formulae \eqref{F_Par}, \eqref{f:energy_Fourier}, we get
	$$
	\E^{+} (f_A) < \left( \max_{r\neq 0} |\FF{A} (r)| \right)^2 |A| 
	$$
	and hence estimate \eqref{f:no_R_E4} of Theorem \ref{t:no_R} is tight (up to our current knowledge of the Fourier coefficients of multiplicative subgroups).
\label{rem:E_4}
\end{remark}

Unfortunately, the method of the proof of Theorem \ref{t:no_R} works for $\E_4^\times (R)$ but not for $\E_k^\times (R)$ with $k<4$. 
In this case we obtain

\begin{theorem}
	Let $A\subseteq \F_p$ be a set, $|A| = \d p$ and $R \subseteq \Spec_\eps (A) \setminus \{0\}$ be any set. 
	Suppose that $p\le \eps^2 |A|^3$. 
	Then
	\begin{equation}\label{f:yes_R_E2}
		\E^\times (R) \ll \eps^{-4} \d^{-1}  |R|^{3/2} \,.
	\end{equation}
	Similarly, 
	\begin{equation}\label{f:yes_R_sigma}
		\sigma^\times (R) \lesssim \eps^{-4} \d^{-1} |R|^{3/4} \left( \frac{\E^{+} (f_A)}{|A|^3} \right)^{1/2} 
		+ \eps^{-4} \d^{-1} \left( 1+ \frac{|R|}{|A|} \right)\,.
	\end{equation}
	\label{t:yes_R}
\end{theorem}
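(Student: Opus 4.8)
The plan is to run the same spectral reduction that opens the proof of Theorem \ref{t:no_R}, but to spend the spectrum on only \emph{one} of the two factors of the energy. Writing $\E^\times (R) = \sum_\la r_{R/R}^2 (\la)$ and recalling the pointwise bound
\[
	r_{R/R} (\la) \le \frac{p}{(\eps |A|)^4} \, \E^{+} (f_A, \la f_A),
\]
established exactly as in Theorem \ref{t:no_R}, I would substitute it into a single copy of $r_{R/R}(\la)$ and keep the other untouched:
\[
	\E^\times (R) \le \frac{p}{(\eps |A|)^4} \sum_\la r_{R/R} (\la) \, \E^{+} (f_A, \la f_A) =: \frac{p}{(\eps |A|)^4} \cdot T.
\]
Not raising to the fourth power (which is precisely what makes the argument of Theorem \ref{t:no_R} fail for $k<4$) leaves $T$ as a genuinely bilinear object, tractable by one application of a strong incidence bound. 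Since $\tfrac{p}{(\eps|A|)^4} \cdot |A|^3 = \eps^{-4} \d^{-1}$, the task reduces to proving $T \ll |A|^3 |R|^{3/2}$.

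To see $T$ as incidences, I unfold $r_{R/R}(\la) = |\{(s,t)\in R^2 : \la t = s\}|$ and $\E^{+}(f_A,\la f_A) = \sum_{a_1,a_2,a_3,a_4} f_A(a_1)f_A(a_2)f_A(a_3)f_A(a_4) [\,a_1-a_2 = \la(a_3-a_4)\,]$, which merges the two constraints into
\[
	T = \sum_{s,t\in R} \sum_{a_1,a_2,a_3,a_4} f_A(a_1)f_A(a_2)f_A(a_3)f_A(a_4)\,[\, t a_1 - s a_3 = t a_2 - s a_4 \,].
\]
Reading $t a_1 - s a_3 = t a_2 - s a_4$ as the affine line $tX - sY = t a_2 - s a_4$ in the variables $(X,Y)=(a_1,a_3)$, this is a weighted incidence count between the Cartesian product $\mathcal{P} = A\times A$, weighted by $f_A(a_1)f_A(a_3)$, and the family $\mathcal{L}$ of lines indexed by $(s,t,a_2,a_4) \in R\times R\times A\times A$, weighted by $f_A(a_2)f_A(a_4)$, so that $|\mathcal{L}| \le |R|^2 |A|^2$. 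I would then invoke the Cartesian--product incidence bound, Theorem \ref{t:line/point_as}. Because $f_A$ has zero mean the main term $|A|^2 |\mathcal{L}|/p$ is annihilated, and because $f_A$ is essentially two--valued the signed weights can be handled without the dyadic decomposition a general weight would force, which is what keeps \eqref{f:yes_R_E2} a clean $\ll$ rather than a $\lesssim$.

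It then remains to read off the estimate. The leading term of Theorem \ref{t:line/point_as} is $|A|^{3/4}|A|^{1/2}|\mathcal{L}|^{3/4} = |A|^{5/4}|\mathcal{L}|^{3/4} \le |A|^{11/4}|R|^{3/2} \le |A|^3 |R|^{3/2}$, already the target. The remaining terms $|\mathcal{L}| + |A|^2 \le |R|^2|A|^2 + |A|^2$ are $\ll |A|^3|R|^{3/2}$ exactly when $|R| \le |A|^2$, and this is guaranteed by the hypothesis $p \le \eps^2 |A|^3$ together with the Parseval bound $|R| \le (\d \eps^2)^{-1}$ from \eqref{f:spec_Par}. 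Collecting these gives $T \ll |A|^3 |R|^{3/2}$ and hence \eqref{f:yes_R_E2}.

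For \eqref{f:yes_R_sigma} I would run the identical scheme on $\sigma^\times (R) = \sum_{x\in R} r_{R/R}(x)$, which counts multiplicative triples $(a,b,c)\in R^3$ with $a=bc$. Now only one slot carries the full $R$--weight, so after substituting the spectral bound a factor of $\E^{+}(f_A)$ survives unresolved — it enters through the $\ell^2$--size of the difference weights $r_{f_A-f_A}$ — producing the factor $(\E^{+}(f_A)/|A|^3)^{1/2}$, while the degenerate (vertical) lines $a_3=a_4$ contribute the additive term $\eps^{-4}\d^{-1}(1+|R|/|A|)$. Here the weights $r_{f_A-f_A}$ are no longer two--valued, so a dyadic decomposition is needed and the logarithmic $\lesssim$ appears. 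The main obstacle throughout is the weighted incidence step: one must make the mean--zero cancellation of the main term rigorous, confirm that the residual error is only $|A|^{11/4}|R|^{3/2}$ and not larger, and verify that each lower--order term is absorbed precisely under the hypothesis $p\le\eps^2|A|^3$.
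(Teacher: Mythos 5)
Your opening reduction is correct, but it is not actually a different route: substituting the pointwise bound $r_{R/R}(\la)\le p(\eps|A|)^{-4}\,\E^{+}(f_A,\la f_A)$ into one factor of $\sum_\la r^2_{R/R}(\la)$ yields $T=\sum_{s,t\in R}\E^{+}\bigl(f_A,(s/t)f_A\bigr)$, which is exactly the quantity $\sum_x r^2_{(f_A-f_A)R}(x)$ (the weighted number of solutions of $t(a_1-a_2)=s(a_3-a_4)$, $s,t\in R$) that the paper bounds. The genuine gap is in your incidence step. Your family $\mathcal{L}$ is not a set of $|R|^2|A|^2$ \emph{distinct} lines: the line $tX-sY=ta_2-sa_4$ depends only on the slope $\mu=t/s$ and the intercept $c=a_4-\mu a_2$, so each geometric line occurs with multiplicity $r_{R/R}(\mu)\cdot|\{(a_2,a_4):a_4-\mu a_2=c\}|$, which a priori can be as large as $|R||A|$ --- and you cannot assume $r_{R/R}$ is small, since that is what the theorem is trying to prove. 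Theorem \ref{t:line/point_as} applies to distinct lines; to use it on a weighted family one must decompose dyadically in the multiplicity $m$ and apply H\"older, which turns the leading term into
$$
|A|^{5/4}\,\|m\|_1^{3/4}\,\|m\|_\infty^{1/4}\le |A|^{5/4}\bigl(|R|^2|A|^2\bigr)^{3/4}\bigl(|R||A|\bigr)^{1/4}=|A|^3|R|^{7/4},
$$
losing a factor $|R|^{1/4}$ against the required $|A|^3|R|^{3/2}$. The loss is structural, not technical: for a fixed slope $\mu$ the line multiplicity and the per--line incidence count are the \emph{same} representation function $r_{f_A-\mu f_A}(c)$, so $T$ is quadratic in these representation numbers (an energy), and any slope--by--slope application of a point--line bound can do no better than the trivial $\E^{+}(f_A,\mu f_A)\le |A|^3$, i.e.\ $\E^\times(R)\ll\eps^{-4}\d^{-1}|R|^2$. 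This is precisely why the paper encodes the six variables $3+3$ as point--plane incidences --- points $(\la,\la a_2,a_3)$ and planes $a_1x_1-x_2-\mu x_3+\mu a_4=0$, both parametrized injectively, with $|\mathcal{P}|,|\Pi|=O(|A|^2|R|)$ --- and applies Rudnev's bound, Theorem \ref{t:Misha+}, whose term $|\mathcal{P}|^{1/2}|\Pi|$ gives $|A|^3|R|^{3/2}$ at once; the hypothesis $p\le\eps^2|A|^3$, via \eqref{f:spec_Par}, gives $|R|\le|A|^2$ and keeps the collinearity term $k|\mathcal{P}|$ subordinate.

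Two further remarks. First, your point set cannot be taken to be $A\times A$ with a clean $\ll$: the balanced function $f_A$ is supported on all of $\F_p$, so the points are $\F_p\times\F_p$ with signed weights, and the assertion that two--valuedness of $f_A$ removes the need for dyadic decomposition is not correct (after fixing a slope, the relevant weights are representation functions, not two--valued). Second, for \eqref{f:yes_R_sigma} your instinct is sound, and there the paper really does use Theorem \ref{t:line/point_as}: $\sigma^\times(R)$ is linear rather than quadratic in $r_{R/R}$, so in the corresponding incidence picture the lines $\{(x,y):y=a_1-\la x\}$ are injectively indexed by $(\la,a_1)\in R\times\F_p$ and the multiplicity obstruction disappears, with the dyadic decomposition of $r_{f_A-f_A}$ (as in the proof of Theorem \ref{t:no_R}, taking $S_\tau=R$) producing the factor $(\E^{+}(f_A)/|A|^3)^{1/2}$ and the $\lesssim$; but your sketch of this half is too vague to count as a proof.
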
	
\begin{proof}
	Using the Fourier transform similar to the proof of Theorem \ref{t:no_R}, we have 
$$
	\frac{(\eps |A|)^4}{p} \cdot  \E^{\times} (R) \le p^{-1} \sum_{\la, \mu \in R} \sum_x |\FF{f_A} (\la x)|^2 |\FF{f_A} (\mu x)|^2 
	=
	\sum_x r^2_{(f_A-f_A)R} (x) \,.
$$
	Clearly, the last quantity can be interpreted as points/planes incidences (with weights), see \cite{AMRS}. 
	Here the number  of the points and planes is at most $O(|A|^2 |R|)$.  
	Finally, using our assumption, we get from \eqref{f:spec_Par}
$$
	|R| \le \frac{p}{\eps^2 |A|} \le |A|^2  \,.
$$
	Applying Theorem \ref{t:Misha+}, we obtain
$$
	\sum_x r^2_{(f_A-f_A)R} (x) \ll |A|^3 |R|^{3/2} \,.
$$
	It follows that
$$
	\E^{\times} (R) \ll \eps^{-4} \d^{-1}  |R|^{3/2} \,.
$$
	as required.

	Similarly,
$$
	\sigma^\times (R) \le (\eps |A|)^{-4} \sum_{\la \in R} \sum_x |\FF{f_A} (x)|^2 |\FF{f_A} (\la x)|^2 
	= \eps^{-4} \d^{-1} |A|^{-3} \sum_{\la \in R} r_{(f_A-f_A)/(f_A-f_A)} (\la) \,.
$$
	After that we can use  the arguments and the notation  from the proof of Theorem \ref{t:no_R} (with $S_\tau = R$) and derive that 
$$
	\sum_{\la \in R} r_{(f_A-f_A)/(f_A-f_A)} (\la) \lesssim \D |P|^{1/2} |R|^{3/4} |A|^{3/2} + \D |A| (|P| + |R|) 
	\ll 
$$
$$
	\ll 
	(\E^{+} (f_A))^{1/2} |R|^{3/4} |A|^{3/2} 
	+ |A|^3 + |A|^2 |R| 
	\,.
$$
	Here we have used a trivial bound $\D \le 2|A|$. 
	It gives us 
$$
	\sigma^\times (R) \lesssim \eps^{-4} \d^{-1} |R|^{3/4} (\E^{+} (f_A) / |A|^3)^{1/2}  + \eps^{-4} \d^{-1} + \eps^{-4} \d^{-1} |R|/|A|
$$
	and this coincides with \eqref{f:yes_R_sigma}. 
$\hfill\Box$
\end{proof}

\begin{example}
	Let $\eps \gg 1$, $R = \Spec_\eps (A) \setminus \{0\}$, and let size of $R$ is comparable with the upper bound which is given by (\ref{f:spec_Par}),
	namely, $|R| \gg \d^{-1} \eps^{-2} \gg \d^{-1}$.
	Then $\E^{\times} (R) \lesssim |R|^{5/2}$.
	It means that we have a non--trivial estimate for the multiplicative energy of the spectrum in this case.
	Similarly, we always have $\E^{+} (f_A) < |A|^3$, so $\sigma^\times (R) \lesssim |R|^{7/4} + |R|^2/|A|$.
\end{example}

\begin{remark}
	The same construction as in Remark \ref{rem:E_4} shows the tightness of bounds \eqref{f:yes_R_E2}, \eqref{f:yes_R_sigma}, again up to our current knowledge of the Fourier coefficients of multiplicative subgroups. 
\label{rem:E_2}
\end{remark}

In the same vein we obtain a result on the growth of $AA+AA$, which improves \cite[Theorem 32]{s_D(A)} for small $\E_4^\times (A)$.

\begin{theorem}
	Let $A\subseteq \F_p$ be sets.
	Then
\begin{equation}\label{f:Misha_zero_sum}
	\sum_{x} r^2_{AA+AA} (x) - \frac{|A|^8}{p}  \lesssim  |A|^4 (\E^{\times}_4 (A) )^{1/2} + \E^{\times}_4 (A) |A|^2 \,.
\end{equation}
\label{t:Misha_zero_sum}
\end{theorem}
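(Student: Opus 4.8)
The plan is to read the left--hand side as a (weighted) additive energy of the product set and to estimate it through Rudnev's point--plane bound, Theorem \ref{t:Misha+}. First I would rewrite
$$\sum_x r^2_{AA+AA}(x) = |\{(a_1,\dots,a_8)\in A^8 ~:~ a_1a_2+a_3a_4 = a_5a_6+a_7a_8\}| =: N,$$
so that $N = \E^{+}(AA)$, where $AA$ is taken with its multiplicity $r_{AA}$. Writing the defining equation as $a_1a_2 - a_5a_6 = a_7a_8 - a_3a_4$, the heuristically expected number of solutions is $|A|^8/p$, which is exactly the quantity subtracted on the left of \eqref{f:Misha_zero_sum}; this is what makes the zero--sum (design) form of Theorem \ref{t:Misha+} applicable, i.e. the term $-|\mathcal P||\Pi|/p$ together with Lemma \ref{l:Vinh}.

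Next I would realise $N$ as a point--plane incidence count in $\F_p^3$. Carrying $\mu = a_7a_8 - a_3a_4$ as a weight $r_{AA-AA}(\mu)$, take the points $\mathcal P = \{(a_1,a_5,1): a_1,a_5\in A\}$ and the planes $a_2 X - a_6 Y - \mu Z = 0$ indexed by $(a_2,a_6)\in A^2$ and by $\mu$. Then an incidence is precisely a solution of $a_1a_2 - a_5a_6 = \mu = a_7a_8 - a_3a_4$, the total plane weight is $|A|^2\cdot|A|^4 = |A|^6$ while $|\mathcal P| = |A|^2$, so the main term $\frac{|\mathcal P|\,W_\Pi}{p}$ equals $|A|^8/p$. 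To pass from this weighted incidence to the unweighted estimate \eqref{f:Misha+_a}, I would decompose $\mu$ dyadically by the size $r_{AA-AA}(\mu)\sim\Delta$ on level sets $Q_\Delta$, exactly as in the Dirichlet--principle step of the proof of Theorem \ref{t:no_R}, apply Theorem \ref{t:Misha+} on each level to $\mathcal P$ and to $\Pi_\Delta$, and sum the resulting bounds against $\Delta$.

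The heart of the argument, and the step I expect to be the main obstacle, is the control of the degenerate contribution, namely the term $k|\mathcal P|$ of \eqref{f:Misha+_a} with $k$ the maximal number of collinear points. To reach the stated estimate one must balance the two families so that $|\mathcal P|\,|\Pi| = |A|^8$ and then show that the rich, collinear configurations are counted by the fourth multiplicative energy, in the form $k \lesssim \E^{\times}_4(A)/|A|^2$: a line carrying many points of the configuration forces many coincidences among the ratios $a_i/a_j$, and four such coincidences are precisely what $\E^{\times}_4(A)$ measures. Granting this, the first term of \eqref{f:Misha+_a} contributes at most $\ll |A|^4(\E^{\times}_4(A))^{1/2}$ and the collinear term contributes $\ll \E^{\times}_4(A)|A|^2$, which together give \eqref{f:Misha_zero_sum}. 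It is exactly this replacement of a crude collinearity bound by the sharp $\E^{\times}_4$--estimate that produces the improvement over \cite[Theorem 32]{s_D(A)} in the regime of small $\E^{\times}_4(A)$; verifying the collinearity/$\E^{\times}_4$ link and tracking the logarithmic losses from the dyadic decomposition (absorbed into $\lesssim$) is the only genuinely delicate point, the rest being the bookkeeping already rehearsed in Theorems \ref{t:no_R} and \ref{t:yes_R}.
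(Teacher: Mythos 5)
Your global strategy (weighted point--plane incidences, dyadic pigeonholing, Theorem \ref{t:Misha+} together with Lemma \ref{l:Vinh} to isolate the main term $|A|^8/p$) has the right flavour, but the concrete incidence realisation you chose cannot produce the right--hand side of \eqref{f:Misha_zero_sum}, and the step you single out as ``the heart of the argument'' is not where the difficulty lies. In your setup the points are $\mathcal{P}=A\times A\times\{1\}$ and the planes $a_2X-a_6Y-\mu Z=0$ carry the weight $r_{AA-AA}(\mu)$. After decomposing dyadically, $r_{AA-AA}(\mu)\sim\D$ on $Q_\D$, you have $|\Pi_\D|\le|A|^2|Q_\D|$, and Theorem \ref{t:Misha+} gives
$$
\D\,\mathcal{I}(\mathcal{P},\Pi_\D)\lesssim \frac{\D|A|^4|Q_\D|}{p}+|A|\cdot|A|^2\cdot \D|Q_\D|+k|A|^2\D \,.
$$
Summing over the $O(\log |A|)$ values of $\D$ and using $\sum_\D \D|Q_\D|\ll\sum_\mu r_{AA-AA}(\mu)=|A|^4$, the middle term contributes $\ll|A|^7$, and nothing better can come out: your construction only ever sees the $\ell^1$--mass of the weights, so the quantity $\E^{\times}_4(A)$ appears nowhere (the natural $\ell^2$--quantity of your weights, $\sum_\mu r^2_{AA-AA}(\mu)$, is the very sum being bounded, so invoking it would be circular). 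Since $\E^{\times}_4(A)$ can be as small as $\sim|A|^4$, the target bound can be of order $|A|^6$, so the loss of a factor $|A|$ is fatal --- the theorem is of interest precisely when $\E^\times_4(A)$ is small. Your proposed repair, a collinearity bound $k\lesssim\E^{\times}_4(A)/|A|^2$, is beside the point: your points form a Cartesian product inside the plane $Z=1$, so $k\le|A|$ trivially (and $\E^{\times}_4(A)/|A|^2\ge|A|^2$, so your claimed inequality is vacuously true), yet this does not help because the dominant term above is $|\mathcal{P}|^{1/2}|\Pi_\D|$, not $k|\mathcal{P}|$; collinearity of the points $(a_1,a_5,1)$ has no connection with multiplicative energy.

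The idea you are missing is a normalisation that makes $\E^{\times}_4(A)$ enter through the $\ell^2$--norm of the weights, symmetrically on the point and the plane side. Assuming $0\notin A$, the paper divides the equation $a_1a'_1+a_2a'_2-a_3a'_3=aa'$ by $aa'$ and counts solutions of $\a\a'+\b\b'-\g\g'=1$ over ratio triples, both sides weighted by $\C^\times_4 (A) (\a,\b,\g) = |A\cap \a A \cap \b A \cap \g A|$. The two identities $\sum\C^\times_4=|A|^4$ and $\sum(\C^\times_4)^2=\E^\times_4(A)$, combined with a \emph{double} dyadic decomposition (one parameter $\D_1$ for the points, one $\D_2$ for the planes), turn the terms of \eqref{f:Misha+_a} into exactly the claimed bound: the first term is $\D_1\D_2|\mathcal{P}||\Pi|^{1/2}\le \D_1|\mathcal{P}|\,(\D_2^2|\Pi|)^{1/2}\le|A|^4(\E^\times_4(A))^{1/2}$, while the collinear term needs only the trivial bound $k\le|A|^2$ and gives $\D_1\D_2 k|\mathcal{P}|\le k(\D_1^2|\mathcal{P}|)^{1/2}(\D_2^2|\Pi|)^{1/2}\le \E^\times_4(A)|A|^2$ --- no refined collinearity estimate is needed anywhere --- and the balanced function $f_A$ together with Lemma \ref{l:Vinh} handles the main term. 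Without the passage to ratios and the weights $\C^\times_4$, there is simply no mechanism in your argument by which $\E^{\times}_4(A)$ could appear.
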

\begin{proof}
	Without loosing of the generality, one can assume that $0\notin A$. 
	We need to estimate the number of the solutions to the equation
$$
	a_1/a \cdot a'_1/a' + a_2/a \cdot a'_2/a' - a_3 / a \cdot a'_3/a' = 1 \,, 
$$
	where $a,a', a_j, a'_j \in A$, $j=1,2,3$. 
	Put
$$
	\C^\times_4 (A) (\a,\beta,\gamma) := |A\cap \a A \cap \beta A \cap \gamma A| \,.
$$
	One can check that
$$
	\sum_{\a,\beta,\gamma} 	\C^\times_4 (A) (\a,\beta,\gamma) = |A|^4 \,,
$$
	and 
\begin{equation}\label{f:C_E}
	\sum_{\a,\beta,\gamma} 	\C^\times_4 (A) (\a,\beta,\gamma)^2 = \E^\times_4 (A) \,.
\end{equation}
	In these terms, 
	we want to bound the sum 
$$
	\sigma := 
	\sum_{\a,\beta,\gamma}\, \sum_{\a',\beta',\gamma'} \C^\times_4 (A) (\a,\beta,\gamma) \C^\times_4 (A) (\a',\beta',\gamma') 
		\d (\a \a'+\beta\beta'-\gamma\gamma' = 1) \,,
$$
	where $\d(x=1)$ equals one iff $x=1$. 
	Using the Dirichlet principle as in the proof of Theorems \ref{t:no_R}, \ref{t:yes_R}, 	we find two numbers $\D_1, \D_2 >0$ 
	and two corresponding  sets of points and planes
	$\mathcal{P}$, $\Pi$ 
	such that 
$$
	\sigma \lesssim \D_1 \D_2 \sum_{\a,\beta,\gamma}\, \sum_{\a',\beta',\gamma'} \mathcal{P} (\a,\beta,\gamma) \Pi (\a',\beta',\gamma')  \d (\a \a'+\beta\beta'-\gamma\gamma' = 1) \,.
$$
	Without loosing of the generality, suppose that $|\mathcal{P}| \le |\Pi|$. 
	Also, notice that $|\mathcal{P}|, |\Pi| \le |A|^4$. 
	Applying Theorem \ref{t:Misha+} 
	(previously inserting  the balanced function $f_A(x) = A(x) - |A|/p$ as in the proofs of Theorems \ref{t:no_R}, \ref{t:yes_R})
	with the maximal number of collinear points $k\le |A|^2$ and using formula \eqref{f:C_E}, combining with Lemma \ref{f:Vinh}, we
	get 
$$
	\sigma \lesssim \D_1 \D_2 |\mathcal{P}| |\Pi|^{1/2} + \D_1 \D_2 k |\mathcal{P}| + |\mathcal{P}|^{1/2} \E^\times_4 (f_A) 
	\le
$$
$$
	\le (\D_2^2 |\Pi|)^{1/2} \D_1 |\mathcal{P}| + k (\D^2_1 |\mathcal{P}|)^{1/2} (\D^2_2 |\Pi|)^{1/2} + |A|^2 \E^\times_4 (f_A)
	\le
$$
$$ 
	\le
	(\E^{\times}_4 (A))^{1/2} |A|^4 + \E^{\times}_4 (A) |A|^2 \,.
$$
	This completes the proof. 
$\hfill\Box$
\end{proof}

\begin{corollary}
	Let $A\subseteq \F_p$, $|A+A| = K|A|$ and $|A+A|^3 |A| \le p^3$.
	Then
$$
	|AA+AA| \gtrsim \min \{ p, \Omega_K (|A|^2) \} \,.
$$
\label{c:AA+AA}
\end{corollary}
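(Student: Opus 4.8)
The plan is to combine Theorem~\ref{t:Misha_zero_sum} with a Cauchy--Schwarz argument and a sum--product input that converts the small additive doubling of $A$ into the smallness of its multiplicative energy. As the proof of Theorem~\ref{t:Misha_zero_sum} makes clear, the quantity $r_{AA+AA}(x)$ appearing there counts the number of ways to write $x = a_1 a_2 + a_3 a_4$ with $a_1,a_2,a_3,a_4 \in A$, so that $\sum_x r_{AA+AA}(x) = |A|^4$ and this weight is supported on the set $AA+AA$. Hence, by the Cauchy--Schwarz inequality,
\begin{equation*}
	|AA+AA| \ge \frac{\left( \sum_x r_{AA+AA}(x) \right)^2}{\sum_x r^2_{AA+AA}(x)} = \frac{|A|^8}{\sum_x r^2_{AA+AA}(x)} \,,
\end{equation*}
and it remains to bound the denominator from above. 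Theorem~\ref{t:Misha_zero_sum} supplies exactly such a bound:
\begin{equation*}
	\sum_x r^2_{AA+AA}(x) \lesssim \frac{|A|^8}{p} + |A|^4 (\E^\times_4 (A))^{1/2} + |A|^2 \E^\times_4 (A) \,.
\end{equation*}

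Next I would feed in the hypothesis $|A+A| = K|A|$ through the sum--product phenomenon: a set with small additive doubling cannot simultaneously carry much multiplicative structure. In the regime guaranteed by $|A+A|^3 |A| \le p^3$ (that is, $K^3 |A|^4 \le p^3$) one expects an estimate of the shape $\E^\times_4 (A) \lesssim K^{O(1)} |A|^4$; this can be obtained from the standard bounds for the multiplicative energy of sets with small sumset (cf.\ \cite{s_D(A)}, \cite{RSS}), for instance via $\E^\times_4 (A) \le |A|^2 \E^\times (A)$ together with a sum--product bound $\E^\times (A) \lesssim K^{O(1)} |A|^2$. Substituting this into the previous display, both nontrivial terms become $\lesssim K^{O(1)} |A|^6$, so that
\begin{equation*}
	\sum_x r^2_{AA+AA}(x) \lesssim \frac{|A|^8}{p} + K^{O(1)} |A|^6 \,.
\end{equation*}

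Finally I would split into two cases according to which term dominates the denominator. If $|A|^8/p$ is the larger term, the Cauchy--Schwarz bound yields $|AA+AA| \gtrsim p$, whereas if $K^{O(1)} |A|^6$ dominates it yields $|AA+AA| \gtrsim |A|^2 / K^{O(1)}$; the two alternatives combine precisely into $|AA+AA| \gtrsim \min\{ p, \Omega_K(|A|^2) \}$, where $\Omega_K$ absorbs the power of $K$ produced by the sum--product step. The size restriction $K^3 |A|^4 \le p^3$ both keeps us in the range where the sum--product estimate is valid and makes the dichotomy consistent, since $|A|^2/K^{O(1)} \le p$ exactly when the second term dominates. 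The main obstacle is the sum--product input of the middle paragraph: extracting the bound $\E^\times_4(A) \lesssim K^{O(1)} |A|^4$ with an explicit, and ideally sharp, power of $K$ is the genuine content here, and it is precisely this $K$-dependence that the notation $\Omega_K$ is meant to encapsulate.
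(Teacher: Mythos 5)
Your outer skeleton --- the Cauchy--Schwarz bound $|A|^8 \le |AA+AA|\cdot \sum_x r^2_{AA+AA}(x)$, the application of Theorem~\ref{t:Misha_zero_sum} to the denominator, and the dichotomy according to whether $|A|^8/p$ dominates --- is exactly the paper's argument. But the middle paragraph, which you yourself flag as ``the genuine content,'' contains a real gap: you propose to obtain $\E^{\times}_4(A) \lesssim K^{O(1)}|A|^4$ from $\E^{\times}_4(A) \le |A|^2\,\E^{\times}(A)$ together with a bound $\E^{\times}(A) \lesssim K^{O(1)}|A|^2$ for sets with $|A+A|=K|A|$. No such bound on $\E^{\times}(A)$ is known over $\F_p$. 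The estimate $\E^{\times}(A)\ll |A+A|^2\log|A|$ is Solymosi's theorem, and its proof is intrinsically real (it uses the ordering of $\mathbb{R}$); in $\F_p$ the statement $\E^{\times}(A)\lesssim K^{O(1)}|A|^2$ is essentially the (open) energy form of the sum--product conjecture for sets of small doubling. Indeed, if it were available it would immediately give $|AA| \ge |A|^4/\E^{\times}(A) \gtrsim_K |A|^2$, a conclusion far stronger than the corollary being proved --- which is a good sanity check that this input cannot be ``standard.'' Neither \cite{s_D(A)} nor \cite{RSS} supplies it: the best unconditional $\F_p$ bounds for $\E^{\times}(A)$ have the shape $|A|^{5/2}$ and do not improve to $K^{O(1)}|A|^2$ under small doubling.

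The paper's route around this is precisely to avoid the second moment and bound the fourth moment directly, which is where current $\F_p$ technology is strong enough. One applies \cite[Lemma 18]{MPR-NRS} (or reruns the pigeonhole--plus--incidence argument of Theorem~\ref{t:no_R}) to get
$$
\E^\times_4 (r_{B+C}) - \frac{|B|^8 |C|^8}{p^3} \lesssim \E^{+}(B,C)^2\, |B|^3 |C|^3 ,
$$
and then chooses $B=A+A$, $C=-A$, using the pointwise inequality $|A|\,A(x) \le r_{B+C}(x)$ to pass back to $A$:
$$
\E^\times_4 (A) - \frac{|A+A|^8}{p^3} \lesssim |A+A|^5 |A|^{-1} .
$$
The hypothesis $|A+A|^3|A|\le p^3$ is then used exactly to absorb the main term, since $|A+A|^8/p^3 \le |A+A|^5/|A| = K^5|A|^4$, giving $\E^\times_4(A) \lesssim K^5|A|^4$ with an explicit power of $K$; substituting into Theorem~\ref{t:Misha_zero_sum} gives $|AA+AA|\gtrsim |A|^2/K^5$ in the second branch of the dichotomy. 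Note the asymmetry this exploits: the $L^4$ norm of $r_{A/A}$ for a set with small sumset can be controlled by point--plane/point--line incidences, while the $L^2$ norm cannot (at the strength you need) --- your reduction goes in the wrong direction, from the tractable quantity to the intractable one.
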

\begin{proof}
	Using \cite[Lemma 18]{MPR-NRS}  (where, actually, a better dependence on $K$ is suggested)
	or just applying  the arguments of the proof of Theorem \ref{t:no_R}, we get
$$
	\E^\times_4 (r_{B+C}) - \frac{|B|^8 |C|^8}{p^3} \lesssim  \E^{+} (B,C)^2 |B|^3 |C|^3 \,.
$$
	Putting $B=A+A$, $C=-A$ and noting that $|A| A(x) \le r_{B+C} (x)$, we obtain
\begin{equation}\label{tmp:E_4_A+A_small}
	\E^\times_4 (A) - \frac{|A+A|^8}{p^3} \lesssim   |A+A|^5 |A|^{-1} \,.
\end{equation}
	Obviously, by the Cauchy--Schwartz inequality,  we have
\begin{equation}\label{tmp:CS} 
	|A|^8 \le |AA+AA| \cdot \sum_{x} r^2_{AA+AA} (x) \,.
\end{equation}
	By Theorem \ref{t:Misha_zero_sum}, we get 
$$
	\sum_{x} r^2_{AA+AA} (x)  - \frac{|A|^8}{p} \lesssim  |A|^4 (\E^{\times}_4 (A) )^{1/2} + \E^{\times}_4 (A) |A|^2 \,.
$$
	If the term $\frac{|A|^8}{p}$  dominates in the last formula, we have from \eqref{tmp:CS} that  $|AA+AA| \gg p$.
	Otherwise in view of \eqref{tmp:E_4_A+A_small} and our condition  $|A+A|^3 |A| \le p^3$, we see that 
$$ 
	|A|^8 \lesssim |AA+AA| \cdot \left( |A|^4 (\E^{\times}_4 (A) )^{1/2} + \E^{\times}_4 (A) |A|^2 \right) 
	\ll |AA+AA| \cdot  K^5 |A|^6  \,.
$$
	This completes the proof. 
$\hfill\Box$
\end{proof}

\bigskip 

Considering $A=\{1,2,\dots, n\}$, where $n$ is sufficiently small comparable to $p$, we see that Corollary \ref{c:AA+AA} is the best possible up to logarithms.


\bigskip

\noindent{I.D.~Shkredov\\
Steklov Mathematical Institute,\\
ul. Gubkina, 8, Moscow, Russia, 119991}
\\
and
\\
IITP RAS,  \\
Bolshoy Karetny per. 19, Moscow, Russia, 127994\\
and 
\\
MIPT, \\ 
Institutskii per. 9, Dolgoprudnii, Russia, 141701\\
{\tt ilya.shkredov@gmail.com}

\end{document}